\documentclass[11pt,reqno]{amsart}%
\usepackage[a4paper,margin=3cm]{geometry}%
\usepackage[pagebackref]{hyperref}%
\usepackage[T1]{fontenc}%
\usepackage[utf8]{inputenc}%
\usepackage{lmodern,mathtools}
\usepackage{amsmath,amsfonts,amssymb,amsthm}
\usepackage{amsmath}
\usepackage{tikz}
\usepackage{dsfont}

\title[]{Regularity of solutions of the Dyson equation and applications}

\author{Valentin Pesce}
\address[VP]{CMAP, Ecole Polytechnique, UMR 7641, 91120 Palaiseau, France.}
\email{\url{mailto:valentin.pesce@polytechnique.edu}}

\date{Spring 2026, compiled \today}
\newtheorem{theorem}{Theorem}[section]%

\newtheorem{corollary}[theorem]{Corollary}%
\newtheorem{proposition}[theorem]{Proposition}%
\newtheorem{lemma}[theorem]{Lemma}%
\newtheorem{remark}[theorem]{Remark}%

\let\ensembleNombre\mathbb
\newcommand\tab[1][0.5cm]{\hspace*{#1}}
\newcommand\mes{\mathcal P(\R)}
\newcommand\mesT{\mathcal P(\mathbb T)}
\newcommand\N{\ensembleNombre{N}}
\newcommand\Z{\ensembleNombre{Z}}

\newcommand\R{\ensembleNombre{R}}

\newcommand\T{\ensembleNombre{T}}

\DeclareMathOperator{\leb}{Leb} 
\DeclareMathOperator{\cotan}{cotan} 

\makeatletter
\def\@MRExtract#1 #2!{#1}     
\renewcommand{\MR}[1]{
  \xdef\@MRSTRIP{\@MRExtract#1 !}%
  \href{http://www.ams.org/mathscinet-getitem?mr=\@MRSTRIP}{MR-\@MRSTRIP}}
\makeatother

\numberwithin{equation}{section}

\keywords{Dyson equation, regularity, long-time behaviour.}

\begin{document}

\begin{abstract}
The goal of this short paper is to investigate the regularity of the solutions of the Dyson equation. In the work of Bertucci and al. \cite{bertucci2022spectral,bertucci2024spectral,bertucci2025new}, a new notion of solutions for the Dyson equation has been introduced using the viscosity solutions theory and they proved a regularization of solutions in $L^\infty$. According to the work of Biane \cite{biane1997free} we should expect a regularization in $C^{1/3}$ of solutions (and not better). In the spirit of the works of Bertucci and al., we shall prove using PDE methods that for almost all time $t>0$ the solution is as expected in $C^{1/3}$. Our approach allows us to extend this result in addition to a drift term in the Dyson equation for which the semi explicit solutions is not necessarily known. We shall also give an application of this result, proving that the solutions of the periodic Dyson equation converge in long-time toward the uniform distribution on the circle in $L^\infty$ norm which was an open question in \cite{bertucci2025new}.
\end{abstract}

\maketitle
 
{\footnotesize\tableofcontents}
\section{Introduction}
In this paper we shall study the $\alpha$-Hölder regularity (in short the $C^\alpha$ regularity) of solutions of the following partial differential equation (in short PDE), known as the Dyson equation:
\begin{equation}
\label{eq:dysonreg}
\partial_t u+\partial_x(uH[u])=0\text{ in } (0,+\infty)\times \R
\end{equation}
where $H[u]$ is the Hilbert transform given by 
$$H[u](x)=\cfrac{1}{\pi}\int_\R\cfrac{u(y)-u(x)}{x-y}\,dy,$$
with the integral understood in the sense of the principal value. 
\newline
The initial condition of \eqref{eq:dysonreg} that shall be considered will be a probability measure $u_0\in\mes$ so that for every time, the solution of \eqref{eq:dysonreg} with initial condition $u_0$ is a probability measure. 
\newline
The PDE \eqref{eq:dysonreg} arises in many different fields as in dislocation dynamics \cite{biler2010nonlinear,forcadel2009homogenization}, fluid mechanics \cite{baker1996analytic,chae2005finite,castro2008global} and also in random matrix theory \cite{dyson1962,anderson2010}. 
\newline
Indeed, recently through the motivation of random matrices there has been a particular interest on \eqref{eq:dysonreg} \cite{lions2022cours,lions2023cours}. Indeed, this equation can be viewed as the limit PDE of the empirical mean of the eigenvalues of the so-called Dyson Brownian motion (see for instance \cite{anderson2010,pesce2025random,Chan,Shi}) and is considered as a "free" analogue of the heat equation in the theory of free probability \cite{voiculescu1993analogues,biane1998stochastic}.
\newline
In \cite{bertucci2022spectral,bertucci2024spectral,bertucci2025new},  the authors use the theory of viscosity solutions to define a robust notion of solutions to \eqref{eq:dysonreg}. Thanks to this approach, the authors gave a more PDE point of view on some results known in the literature of large random matrices mostly through combinatorial arguments or free probability.
\newline
An important result obtained in \cite{bertucci2024spectral,bertucci2025new} is that the Dyson equation regularizes solutions in $L^\infty$ meaning that there exists a universal constant $C>0$ such that for every $u_0\in \mes$ the unique solution $u$ of \eqref{eq:dysonreg} with initial condition $u_0$ has a density for $t>0$ and 
\begin{equation}
\label{eq:Linftyboundregu}
||u(t,.)||_{L^\infty}\le \cfrac{C}{\sqrt{t}}.
\end{equation}
\newline
Having in mind the fundamental solution of the Dyson equation given by the so-called semi-circular distribution we could expect that solutions of the Dyson equation are $C^\alpha$ for every time $t>0$ for a certain $\alpha>0$. 
This result was proved by using some complex analysis techniques from the free probability theory by Biane in \cite{biane1997free}. Indeed, we can express the solution of the Dyson equation as a free sum of the initial condition and the fundamental solution of this PDE. Thanks to this point of view, Biane proved that the solutions of the Dyson equation are at least $C^{1/3}$ for every time $t>0$ (and for a general initial condition we can not expect a better exponent).
\newline 
Obtaining a quantitative form of such result was in particular raised by Pierre-Louis Lions during his course at Collège de France \cite{lions2023cours} and was an open question in the framework of Bertucci et al. \cite{bertucci2022spectral,bertucci2024spectral,bertucci2025new}. 
\newline
On the PDE part of the literature, the $C^{\alpha}$ regularization of solutions of parabolic PDE is reminiscent of the De Giorgi-Nash-Moser technic \cite{de1956sull,nash1958continuity,moser1960new}. This method, that was originally introduced for solving the $\text{XIX}^{th}$ Hilbert's problem for parabolic/elliptic PDE, has been extended to continuity equation with fractional diffusion (as the Dyson equation) in $\R^n$ by Caffarelli et al \cite{caffarelli2013regularity,caffarelli2016regularity} 
. This strategy also proves prove that the solutions of the Dyson equation are in $C^\alpha$ for every time $t>0$ for a certain $\alpha>0$. Nonetheless, as always with the De Giorgi-Nash-Mozer approach the proof is quite technical (for instance the result is not fully proved in dimension 1 in \cite{caffarelli2016regularity} because some additional technical difficulties appear in the case of the dimension 1 but according to the authors it should not changed the strategy and the result obtained for every dimension strictly greater than 1). This motivates to find a simpler PDE approach to prove that the solutions of the Dyson equation have Hölder regularity. Moreover, we shall present an approach that is quite robust for instance if we add a drift term in the Dyson equation for which an explicit solution using free probability is not necessarily known.
\newline
In addition to be a natural question in the scope of PDE theory, the question of regularity of solutions of the Dyson equation has also other applications. For instance, it is also linked with the long-time behaviour of solutions in the periodic case. Indeed, in \cite{bertucci2025new} it was proved that the solution of the periodic Dyson equation converges in all $L^p$ for $1\le p<+\infty$ toward the uniform distribution on the circle when $t$ goes to infinity. In the case $p=+\infty$ the question was not answered since authors lacked some $C^\alpha$ regularity of solutions. Thanks to the new a priori estimates obtained in this paper we shall answer to this question. 
\newline
\newline
\tab In the first part of the paper we establish some a priori estimates that formally imply that solutions of the Dyson equation are in $L^3_t((t,T), C_x^{1/3})$ for every $T\ge t >0$ and so in particular is in $C_x^{1/3}$ for almost all time $t>0$. Then, we explain how to prove that these a priori estimates can be made rigorous for solutions of the Dyson equation in the context of the work of Bertucci et al \cite{bertucci2022spectral,bertucci2024spectral,bertucci2025new}. Finally, we give an application of this result for solutions of the periodic Dyson equation proving that solutions converge in $L^\infty$ toward the uniform distribution on the circle.  
\section{Regularity of solutions of the Dyson equation}
\subsection{Notation }$\,$
\newline
$\bullet$ A function $f:\R\to\R$ is said to be $C^\alpha$ for $0<\alpha<1$ if $\sup_{x\ne y}\frac{|f(x)-f(y)|}{|x-y|^\alpha}<+\infty$. In this case, we shall write $||f||_{C^\alpha}:=\sup_{x\ne y}\frac{|f(x)-f(y)|}{|x-y|^\alpha}$ for the $C^\alpha$ seminorm of $f$.
\newline
$\bullet$ We now recall some basic properties about the Hilbert transform. 
\newline
The Fourier transform of the Hilbert transform is given by $$\mathcal F(H[f])(\xi)=-i\text{ sign}(\xi) \mathcal F(f)(\xi),$$
where $ \mathcal F(g)$ denotes the Fourier transform of $g$ and $\text{sign}$ is the sign function. 
From this we deduce that the Hilbert is an antisymmetric isometry from $L^2(\R)$ to $L^2(\R)$ and for $f \in L^2(\R)$ we have $H[H[f]]=-f$. 
We shall also use the so-called Cotlar's identity for $f\in L^2(\R)$
\begin{equation}
\label{eq:Cotlar identity}
H[f]^2-f^2=2H[fH[f]].
\end{equation}
$\bullet$ We now introduce some standard notation about Sobolev spaces and fractional Laplacian (we give these notations for functions defined on $\R$). 
\newline
For $s\ge 0$, we define the fractional Laplacian $(-\Delta)^s$ as: $$\mathcal F((-\Delta)^{s/2} f)=|.|^s\mathcal F(f).$$ 
In the case $0<s<2$ the fractional Laplacian can be rewritten as an integral operator: 
$$(-\Delta)^s f(x)=c_s\int_\R\cfrac{f(x)-f(y)}{|x-y|^{1+2s}},$$
where $c_s>0$ is an explicit constant.
In particular, we can link $(-\Delta)^{1/2}$ with the Hilbert transform as follows:
$(-\Delta)^{1/2}f=\partial_x H[f]=H[\partial_x f]$.
Using the fractional Laplacian we can as usual define the semi-Sobolev norm for $s>0$
$$||u||_{\dot H^s}^2=\langle(-\Delta)^{s}f,f \rangle_{L^2(\R)}=\|(-\Delta)^{s/2}f \|_{L^2(\R)}^2.$$
\subsection{A priori estimates}
In this section we derive some a priori estimates for smooth solutions of \eqref{eq:dysonreg} that vanishes at infinity. We shall justify in next section how to make rigorous these a priori estimates. Starting from now, we shall use $C$ as a universal constant that does not depend on any parameter.
\newline
To lighten the notation, we write $H[u](t,x)$ instead of $H[u(t,.)](x)$ and $(-\Delta)^s[u](t,x)$ instead of $(-\Delta)^s[u(t,.)](x)$ for a function $u:\R^+\times \R\to \R$.
\begin{lemma}
\label{lemma:entropy}
Let $u$ be a smooth solution to \eqref{eq:dysonreg} that vanishes at infinity. Then $u$ satisfies the following a priori estimate
\begin{equation}
\label{eq:apriorilogol}
\cfrac{d}{dt}\int_\R u(t,x)\log(u(t,x))dx+||u(t,.)||_{\dot{H}^{1/2}}^2= 0.
\end{equation}
\end{lemma}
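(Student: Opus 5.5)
We differentiate the entropy in time, substitute the equation, and integrate by parts. Throughout we treat $u$ as smooth and positive, or else regularize the logarithm by $\log(u+\varepsilon)$ and let $\varepsilon\to0$. We also assume that $u$, $H[u]$ and their derivatives decay fast enough that the boundary terms at infinity vanish.

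First, from \eqref{eq:dysonreg} we compute
\[
\frac{d}{dt}\int_\R u\log u\,dx=\int_\R \partial_t u\,(\log u+1)\,dx=-\int_\R \partial_x(uH[u])(\log u+1)\,dx .
\]
The constant $1$ contributes nothing, since the integral of a derivative of a function vanishing at infinity is zero; this is conservation of mass. Integrating by parts in the remaining term gives
\[
\int_\R uH[u]\,\partial_x\log u\,dx=\int_\R H[u]\,\partial_x u\,dx .
\]
The key cancellation is that the factor $u$ in the flux kills the $1/u$ coming from $\partial_x \log u$. This is why no control of $u$ from below is needed, and why the regularization argument is harmless.

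Next we identify $\int_\R H[u]\,\partial_x u\,dx$ with $-\|u\|_{\dot H^{1/2}}^2$, using the facts recalled in the Notation subsection. Since $\partial_x$ commutes with $H$ and $H$ is antisymmetric on $L^2$,
\[
\int_\R H[u]\,\partial_x u\,dx=-\int_\R u\,H[\partial_x u]\,dx=-\langle u,(-\Delta)^{1/2}u\rangle_{L^2}=-\|u\|_{\dot H^{1/2}}^2 .
\]
This is also visible in Fourier variables: $\mathcal F(H[u])\,\overline{\mathcal F(\partial_x u)}=-i\,\mathrm{sign}(\xi)\,\overline{i\xi}\,|\hat u|^2=-|\xi||\hat u|^2$. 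The sign has to be tracked carefully, because the Hilbert transform here is defined with kernel $\frac{1}{\pi(x-y)}$ and Fourier multiplier $-i\,\mathrm{sign}(\xi)$. Together with $(-\Delta)^{1/2}=\partial_x H$, this convention is exactly what makes the quadratic form positive.

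Combining the two steps gives $\frac{d}{dt}\int_\R u\log u\,dx=-\|u\|_{\dot H^{1/2}}^2$, which is \eqref{eq:apriorilogol}.

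The only delicate point is justifying the integrations by parts and the behaviour of $u\log u$ where $u$ is small or vanishes. Under the smoothness and decay assumptions in the statement, I would handle this by working with $\log(u+\varepsilon)$. With that regularization the flux term becomes
\[
\int_\R H[u]\,\frac{u\,\partial_x u}{u+\varepsilon}\,dx,
\]
which converges to $\int_\R H[u]\,\partial_x u\,dx$ as $\varepsilon\to0$ by dominated convergence, since $u\,\partial_x u/(u+\varepsilon)$ is bounded by $|\partial_x u|$.
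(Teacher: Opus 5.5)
Your proposal is correct and follows essentially the same computation as the paper: substitute the equation, integrate by parts so the flux factor $u$ cancels the $1/u$ from $\partial_x \log u$, then use the antisymmetry of $H$ together with $(-\Delta)^{1/2}f=H[\partial_x f]$ to get $\int_\R H[u]\,\partial_x u\,dx=-\|u\|_{\dot H^{1/2}}^2$. Your $\log(u+\varepsilon)$ regularization is a harmless extra layer of care; the paper omits it here and defers rigor to the viscous approximation in the later proposition. To make it literal, the regularized functional is $\int_\R[(u+\varepsilon)\log(u+\varepsilon)-\varepsilon\log\varepsilon]\,dx$.
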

\begin{proof}
We directly compute by using \eqref{eq:dysonreg} and integrating by parts:
\begin{align*}
\cfrac{d}{dt}\int_\R u(t,x)\log(u(t,x))dx&=\int_\R \partial_t u(t,x)(\log(u(t,x))+1)dx\\
&=-\int_\R \partial_x [uH[u]](t,x)(\log(u(t,x))+1)dx\\
&=\int_\R uH[u](t,x)\cfrac{\partial_x u(t,x)}{u(t,x)}dx\\
&=\int_\R H[u](t,x)\partial_x u(t,x)dx\\
&=-\int_\R (-\Delta)^{1/2}[u](t,x) u(t,x)dx.
\end{align*}
This concludes the proof. 
\end{proof}
This a priori estimate formally implies that a solution of the Dyson equation is in $L^2_t(H^{1/2}_x)$. This does not imply Hölder regularity of solutions to the Dyson equation since $H^{1/2}$ does not embed in any space $C^\alpha$. It almost says that for almost every time $t>0$ the solution is in $L_x^\infty$ which is an information that we already know, as proved in \cite{bertucci2024spectral}. To go further we need another a priori estimate. 
\begin{lemma}
\label{lemma:Cregulapriori}
Let $u$ be a smooth solution of \eqref{eq:dysonreg} that vanishes at infinity. Then $u$ satisfies the following a priori estimate
\begin{equation}
\label{eq:C1/3}
\cfrac{1}{2}\,\cfrac{d}{dt}\, ||u(t,.)||_{\dot{H}^{1/2}}^2+\cfrac{2}{9}\,||u^{3/2}(t,.)||_{\dot H^1}^2+\cfrac{1}{2} \int_\R((-\Delta)^{1/2}[u](t,x))^2u(t,x)dx =0.
\end{equation}
\end{lemma}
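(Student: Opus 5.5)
The plan is to differentiate $\frac12\|u(t,.)\|_{\dot H^{1/2}}^2=\frac12\langle (-\Delta)^{1/2}u,u\rangle$ in time, substitute \eqref{eq:dysonreg}, and show that the cubic term which is not sign-definite equals minus itself plus definite terms. Cotlar's identity \eqref{eq:Cotlar identity} is what produces this. Since $(-\Delta)^{1/2}$ is self-adjoint, and using $(-\Delta)^{1/2}u=H[\partial_x u]=\partial_x H[u]$, I would first write
\begin{align*}
\frac12\frac{d}{dt}\|u\|_{\dot H^{1/2}}^2=-\int_\R H[u_x]\,\partial_x(uH[u])\,dx
=-\int_\R u_x\,H[u]\,H[u_x]\,dx-\int_\R u\,(H[u_x])^2\,dx .
\end{align*}
The second term is already $-\int u((-\Delta)^{1/2}u)^2$. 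Denote the first term by $T:=-\int u_x H[u]H[u_x]$.

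To handle $T$, I would note that $H[u]H[u_x]=\frac12\partial_x(H[u])^2$, and integrate by parts to get $T=\frac12\int u_{xx}(H[u])^2$. Applying Cotlar's identity $(H[u])^2=u^2+2H[uH[u]]$ then gives
\begin{align*}
T=\frac12\int_\R u_{xx}u^2\,dx+\int_\R u_{xx}\,H[uH[u]]\,dx .
\end{align*}
The first piece equals $-\int u u_x^2$ after one integration by parts.

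For the second piece I would use the antisymmetry of $H$ to move it onto $u_{xx}$, giving $-\int H[u_{xx}]\,uH[u]=-\int \partial_x^2(H[u])\,uH[u]$. One more integration by parts turns this into $\int (H[u])_x\,(uH[u])_x=\int u((H[u])_x)^2+\int u_x H[u](H[u])_x$. Since $(H[u])_x=H[u_x]=(-\Delta)^{1/2}u$, this equals $\int u((-\Delta)^{1/2}u)^2-T$.

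Hence $2T=-\int u u_x^2+\int u((-\Delta)^{1/2}u)^2$. Plugging this back in yields
\begin{align*}
\frac12\frac{d}{dt}\|u\|_{\dot H^{1/2}}^2=-\frac12\int_\R u\,u_x^2\,dx-\frac12\int_\R u\,((-\Delta)^{1/2}u)^2\,dx .
\end{align*}
Finally, $\partial_x u^{3/2}=\frac32 u^{1/2}u_x$, so $\|u^{3/2}\|_{\dot H^1}^2=\frac94\int u u_x^2$ and $\frac12\int uu_x^2=\frac29\|u^{3/2}\|_{\dot H^1}^2$, which is \eqref{eq:C1/3}.

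I expect the main obstacle to be the term $T$, which has no sign. The key idea is the self-referential identity for $T$ obtained from Cotlar's identity, and the care needed is in tracking signs through the antisymmetry of $H$ and the several integrations by parts. The boundary terms all vanish because $u$ is smooth and decays at infinity, and $L^2$ membership is needed so that Cotlar's identity and $H^2=-\mathrm{Id}$ apply. Both are guaranteed by the standing smoothness and decay assumptions.
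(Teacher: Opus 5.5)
Your computation is correct. It reaches the same intermediate identity as the paper, $T=\frac12\int_\R u\,\bigl((H[u_x])^2-u_x^2\bigr)\,dx$, but by a different manipulation.

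The paper starts from the same splitting. It then uses the antisymmetry of $H$ to move the Hilbert transform off $H[u]$, writing $T=\int_\R H\bigl[u_x H[u_x]\bigr]\,u\,dx$, and applies Cotlar's identity with $f=u_x$. This gives the formula for $T$ in one line, with no further integration by parts.

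You instead write $H[u]H[u_x]=\frac12\partial_x(H[u])^2$, integrate by parts, and apply Cotlar's identity to $f=u$ itself. The leftover piece then regenerates $-T$, so you obtain $T$ by solving $2T=-\int_\R u\,u_x^2\,dx+\int_\R u\,((-\Delta)^{1/2}u)^2\,dx$.

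The trade-offs are as follows. The paper's route is shorter and involves only $u_x$ and $H[u_x]$. Yours needs $u_{xx}$, $\partial_x^2H[u]$ and two extra integrations by parts. That costs nothing under the smoothness and decay assumptions, but asks for one more derivative if you wanted to run it at lower regularity. In exchange, your version uses Cotlar's identity only for $u$. Its self-referential step also shows directly that the sign-indefinite cubic term $T$ must split into the two sign-definite terms $-\frac12\int_\R u\,u_x^2\,dx$ and $\frac12\int_\R u\,((-\Delta)^{1/2}u)^2\,dx$.
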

\begin{proof}
Using the symmetry of the operator $(-\Delta)^{1/2}$ from $L^2(\R)$ to $L^2(\R)$ we get:
\begin{equation}
\label{eq:identity2}
\begin{split}
\cfrac{1}{2}\,\cfrac{d}{dt}\,||u(t,.)||_{\dot{H}^{1/2}}^2&=\cfrac{1}{2}\cfrac{d}{dt}\int_\R (-\Delta)^{1/2}[u](t,x) u(t,x)dx\\
&=\int_\R\partial_t u(t,x) (-\Delta)^{1/2} [u](t,x) dx\\
&=-\int_\R\partial_x(u(t,x)H[u](t,x))(-\Delta)^{1/2} [u](t,x)dx\\
&=-\int_\R\partial_xu(t,x)H[u](t,x)(-\Delta)^{1/2} [u](t,x)dx-\int_\R u(t,x)((-\Delta)^{1/2} [u](t,x))^2dx.
\end{split}
\end{equation}
We rewrite the first term using the antisymmetry of the Hilbert transform in order to apply the Cotlar identity
\begin{align*}
-\int_\R\partial_x u(t,x)H[u](t,x)(-\Delta)^{1/2} [u](t,x)dx&=\int_\R H[\partial_x u(t,x)(-\Delta)^{1/2} [u](t,x)]u(t,x)dx\\
&=\int_\R H[\partial_x u(t,x)H[\partial_x u](t,x)]u(t,x)dx,
\end{align*}
since $(-\Delta)^{1/2}(f)=H[\partial_x f]$.
\newline 
Applying the Cotlar identity we obtain that this term is equal to 
\begin{equation}
\cfrac{1}{2}\int_\R\left((H[\partial_x u](t,x))^2-(\partial_x u(t,x))^2\right)u(t,x)dx=\cfrac{1}{2}\int_\R\left( ((-\Delta)^{1/2}[u](t,x))^2-(\partial_x u(t,x))^2\right)u(t,x)dx.
\end{equation}
Inserting this equality in \eqref{eq:identity2} yields
\begin{equation}
\begin{split}
\cfrac{1}{2}\,\cfrac{d}{dt}\,||u(t,.)||_{\dot{H}^{1/2}}^2=-\cfrac{1}{2}\int_\R((-\Delta)^{1/2}[u](t,x))^2 u(t,x)dx-\cfrac{1}{2}\int_\R (\partial_x u(t,x))^2 u(t,x)dx
\end{split}
\end{equation}
We conclude by noticing that $$\int_\R (\partial_x u(t,x))^2 u(t,x)dx=\int_\R (\partial_x u(t,x)u^{1/2}(t,x))^2 dx=\cfrac{4}{9}\int_\R (\partial_x (u^{3/2})(t,x))^2 dx=\cfrac{4}{9}\,||u^{3/2}||_{\dot H^1}^2.$$
\end{proof}
This estimates is fundamentally linked with the $C^{1/3}$ regularity expected from the work of Biane since from this estimate we formally obtain that $u^{3/2}$ is in $H^1(\R)$ and so by the Sobolev imbedding $u^{3/2}$ is $C^{1/2}$ and so $u$ is $C^{1/3}$ as the next lemma shows.
\begin{lemma}
\label{lemma:Calpha}
Let $f: \R\to \R^+$ be a real function such that $f^{3/2}$ is $C^{1/2}$ then $f$ is $C^{1/3}$ and more precisely there exists a universal constant $C>0$ such that $||f||_{C^{1/3}}\le C||f^{3/2}||_{C^{1/2}}^{2/3}$. 
\end{lemma}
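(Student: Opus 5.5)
The plan is to reduce everything to an elementary inequality for the map $s\mapsto s^{3/2}$ on $[0,+\infty)$. Set $g:=f^{3/2}$, so that $f=g^{2/3}$ with $g\ge 0$. The claim will follow from the pointwise inequality
\begin{equation*}
|a^{2/3}-b^{2/3}|\le |a-b|^{2/3}\qquad\text{for all } a,b\ge 0,
\end{equation*}
i.e.\ the $2/3$-Hölder continuity of the concave function $s\mapsto s^{2/3}$ with constant $1$. Granting this, for $x\ne y$ we get
\begin{equation*}
|f(x)-f(y)|=|g(x)^{2/3}-g(y)^{2/3}|\le |g(x)-g(y)|^{2/3}\le \left(\|g\|_{C^{1/2}}|x-y|^{1/2}\right)^{2/3}=\|g\|_{C^{1/2}}^{2/3}|x-y|^{1/3}.
\end{equation*}
Dividing by $|x-y|^{1/3}$ and taking the supremum gives $\|f\|_{C^{1/3}}\le \|f^{3/2}\|_{C^{1/2}}^{2/3}$, so in fact $C=1$ works.

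To prove the elementary inequality, assume without loss of generality $a\ge b\ge 0$ and write $a=b+h$ with $h\ge 0$. Since $\phi(s)=s^{2/3}$ is concave with $\phi(0)=0$, it is subadditive: $\phi(b+h)\le\phi(b)+\phi(h)$. This gives $a^{2/3}-b^{2/3}\le h^{2/3}=(a-b)^{2/3}$, and the left-hand side is nonnegative because $\phi$ is increasing. For subadditivity itself one can note that $\phi(s)/s$ is nonincreasing for $s>0$, hence $\phi(b)\ge \frac{b}{b+h}\phi(b+h)$ and $\phi(h)\ge \frac{h}{b+h}\phi(b+h)$; summing the two yields the claim, and the cases $b=0$ or $h=0$ are trivial.

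No step is a genuine obstacle. The only point requiring care is that $f\ge 0$ (assumed in the statement), so that $f=(f^{3/2})^{2/3}$ is well defined and the concavity argument on $[0,\infty)$ applies. The exponent bookkeeping $\tfrac23\cdot\tfrac12=\tfrac13$ is exactly what yields the $C^{1/3}$ regularity.
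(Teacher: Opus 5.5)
Your proof is correct and follows the paper's argument: both apply the $2/3$-Hölder continuity of $s\mapsto s^{2/3}$ on $[0,\infty)$ to $g=f^{3/2}$, and both use the exponent bookkeeping $\frac{2}{3}\cdot\frac{1}{2}=\frac{1}{3}$. You additionally prove that Hölder bound via subadditivity of the concave map, which shows $C=1$ works; the paper leaves this step implicit.
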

\begin{proof}
The proof is a straightforward use of the fact $x\to x^{2/3}$ is $2/3$-Hölder: 
\begin{align*}
\sup_{x\ne y}\cfrac{|f(x)-f(y)|}{|x-y|^{1/3}}&= \sup_{x\ne y}\cfrac{|(f^{3/2}(x))^{2/3}-(f^{3/2}(y))^{2/3}|}{|x-y|^{1/3}}\\
&\le C \sup_{x\ne y}\cfrac{|f^{3/2}(x)-f^{3/2}(y)|^{2/3}}{|x-y|^{1/3}}\\
&=C \left(\sup_{x\ne y}\cfrac{|f^{3/2}(x)-f^{3/2}(y)|}{|x-y|^{1/2}}\right)^{2/3}<+\infty.
\end{align*}
\end{proof}
\begin{remark}
It is also natural to ask if we can obtain an a priori estimate on $\frac{d}{dt}||u^{3/2}(t,.)||_{\dot H^1}^2$. After some painful computations, one can obtain:
\begin{equation}
\label{eq:autreapriori}
\cfrac{1}{2}\,\cfrac{d}{dt}||u^{3/2}(t,.)||_{\dot H^1}^2+\cfrac{9}{4}\int_\R(-\Delta)^{1/2}[u]u^2(-\partial_{xx}^2)u = 0.
\end{equation}
This estimate is quite simple but the term $\int_\R(-\Delta)^{1/2}[u]u^2(-\partial_{xx}^2)u$ has no sign and so there is a priori no decrease in time of $||u^{3/2}(t,.)||_{\dot H^1}$.
\end{remark}
\subsection{Regularity of solutions}
\label{section:regularitysolutions}
In the framework of \cite{bertucci2022spectral,bertucci2024spectral} the solutions of \eqref{eq:dysonreg} are in $C([0,T],\mes)$ where $\mes$ is endowed with the topology of weak convergence (the test functions are continuous and bounded). Actually to make rigorous the previous estimates we shall consider the initial condition in $\mathcal P_2(\R)$ the set of probability measures with a finite second moment. A simple computation prove that in this case the solution is in $\mathcal P_2(\R)$ for every time $t>0$ and a simple direct computation show that $$\int_\R u(t,x)x^2 dx=t+\int_\R u(0,x)x^2dx.$$ 
\newline
To justify the a priori estimates, the strategy is usual: we regularize the initial condition and we shall consider a smooth solution to 
\begin{equation}
\label{edpregularisée}
\partial_t u^\varepsilon+\partial_x(u^\varepsilon H[u^\varepsilon])-\varepsilon \partial_{xx}^2u^\varepsilon=0
\end{equation}
for $\varepsilon>0$ and then let $\varepsilon$ goes to 0 using some weak compactness to pass to the limit in the estimates (using that if $x_n$ weakly converges to an $x$ then $||x||\le \liminf ||x_n||$) and the stability of the Dyson equation to obtain inequalities instead of equalities. 
\newline
Let us mention that the existence of a strong solution of \eqref{edpregularisée} with a smooth initial data is standard and was proved in \cite{bertucci2024spectral} (Proposition 4.5) in order to prove the $L^\infty$ regularization of solutions of \eqref{eq:dysonreg}.

\begin{proposition}
Let $u_0\in \mathcal P_2(\R)\cap L^\infty(\R)$. Then the solution to \eqref{eq:dysonreg} with initial condition $u_0$ satisfies the following estimates for $h\ge t> 0$:
\begin{equation}
\label{eq:finalentropy}
\int_\R u(t,x)\log(u(t,x))dx+\int_0^t ||u(s,.)||_{\dot H^{1/2}}^2 ds\le \int_\R u_0(x)\log(u_0(x))dx
\end{equation}
\begin{equation}
\label{eq:final3/2}
||u(h,.)||_{\dot H^{1/2}}^2+\cfrac{4}{9}\int_t^h ||u^{3/2}(s,.)||_{\dot H^{1}}^2 ds\le ||u(t,.)||_{\dot H^{1/2}}^2.
\end{equation}
\end{proposition}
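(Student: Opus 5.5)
The plan is the regularization scheme announced just before the statement. I would mollify $u_0$ into $u_0^\varepsilon$: smooth, positive (for instance mix with a tiny Gaussian), in $\mathcal P_2(\R)$, with $\int u_0^\varepsilon\log u_0^\varepsilon\to\int u_0\log u_0$. This is possible because $u_0\in L^\infty\cap\mathcal P_2$, so $u_0\log u_0$ is integrable: $\log u_0\le \log\|u_0\|_\infty$ from above, and the negative part is controlled by the second moment. Let $u^\varepsilon$ be the smooth solution of \eqref{edpregularisée} given by Proposition 4.5 of \cite{bertucci2024spectral}.

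The first step is to redo the computations of Lemma \ref{lemma:entropy} and Lemma \ref{lemma:Cregulapriori} with the extra viscous term. For the entropy, $-\varepsilon\partial^2_{xx}u^\varepsilon$ contributes $\varepsilon\int (\partial_x u^\varepsilon)^2/u^\varepsilon\ge 0$ to the dissipation. For the $\dot H^{1/2}$ identity, it contributes $\varepsilon\|u^\varepsilon\|_{\dot H^{3/2}}^2\ge0$. The term $\frac12\int((-\Delta)^{1/2}u^\varepsilon)^2u^\varepsilon$ is nonnegative since $u^\varepsilon\ge0$, so I would drop it. Integrating in time then gives \eqref{eq:finalentropy} and \eqref{eq:final3/2} for $u^\varepsilon$, with $u_0^\varepsilon$ on the right. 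Integrations by parts need decay at infinity, which I take from the smoothness and integrability of the strong solution.

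The second step provides uniform bounds. The entropy is bounded below uniformly on $[0,T]$ because the second moment grows at most linearly: the identity $\int x^2u=t+\int x^2u_0$ holds, up to an $O(\varepsilon t)$ correction, for the regularized problem. Consequently $u^\varepsilon$ is bounded in $L^2((0,T),\dot H^{1/2})$. This also gives, for almost every $t$, that $\|u^\varepsilon(t)\|_{\dot H^{1/2}}$ is bounded along subsequences, and then \eqref{eq:final3/2} bounds $\|(u^\varepsilon)^{3/2}\|_{L^2((t,h),\dot H^1)}$. I would also use the uniform $L^\infty$ bound $C/\sqrt t$ from \eqref{eq:Linftyboundregu}, which holds for the viscous problem as well.

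The final step is passing to the limit. The stability of the Dyson equation gives $u^\varepsilon\to u$ in $C([0,T],\mes)$, where $u$ is the viscosity solution. I would combine lower semicontinuity of $\int u\log u$ (convex, with tightness from second moments) and of $\dot H^{1/2}$ norms under weak convergence to get \eqref{eq:finalentropy}. For \eqref{eq:final3/2} I need $(u^\varepsilon)^{3/2}\rightharpoonup u^{3/2}$ weakly in $L^2((t,h),\dot H^1)$. \emph{This identification of the nonlinear limit is the main obstacle.} I would obtain strong $L^2_{t,x}$ compactness via Aubin–Lions, using the $L^2_t\dot H^{1/2}$ bound together with the $L^\infty$ bound and a time-derivative bound in a negative Sobolev space from the equation. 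That makes $u^\varepsilon\to u$ a.e., so $(u^\varepsilon)^{3/2}\to u^{3/2}$ a.e.; uniqueness of weak limits then identifies the limit.

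The pointwise-in-time inequality at $t,h$ requires additional care: the weak lower semicontinuity is needed at fixed times, while the right-hand side is only controlled for almost every $t$. I would prove the estimate first for almost every $t,h$, or by integrating in $t$ against test functions, and then extend it by lower semicontinuity of $s\mapsto\|u(s)\|_{\dot H^{1/2}}$, which follows from weak continuity in time.
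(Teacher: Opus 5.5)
There is a genuine gap, in the passage to the limit for \eqref{eq:final3/2}. Your scheme (viscous approximation, the identities with the extra nonnegative terms $\varepsilon\int(\partial_x u^\varepsilon)^2/u^\varepsilon$ and $\varepsilon\|u^\varepsilon\|_{\dot H^{3/2}}^2$, second-moment control, Aubin--Lions, identification of the limit of $(u^\varepsilon)^{3/2}$) is the paper's, and it does give \eqref{eq:finalentropy}.

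Because you run the approximation from time $0$, your inequality for $u^\varepsilon$ has $\|u^\varepsilon(t,.)\|_{\dot H^{1/2}}^2$ on the right-hand side. To pass to the limit there you need $\limsup_{\varepsilon}\|u^\varepsilon(t,.)\|_{\dot H^{1/2}}^2\le\|u(t,.)\|_{\dot H^{1/2}}^2$. Weak convergence gives only $\|u(t,.)\|_{\dot H^{1/2}}^2\le\liminf_\varepsilon\|u^\varepsilon(t,.)\|_{\dot H^{1/2}}^2$, which is the wrong direction. Aubin--Lions gives strong convergence only in $L^2_tL^2_{\mathrm{loc}}$, not in $\dot H^{1/2}$, and certainly not at a fixed time. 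What your argument actually yields, for a.e.\ $t$, is
\[
\|u(h,.)\|_{\dot H^{1/2}}^2+\frac49\int_t^h\|u^{3/2}(s,.)\|_{\dot H^1}^2\,ds\le\liminf_{\varepsilon\to0}\|u^\varepsilon(t,.)\|_{\dot H^{1/2}}^2 .
\]
This is weaker than the statement. It does not give the monotonicity of $s\mapsto\|u(s,.)\|_{\dot H^{1/2}}$ that the paper uses right afterwards.

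Your proposed repair misdiagnoses the difficulty as an almost-everywhere issue. Restricting to a.e.\ $t$, or averaging in $t$ against test functions, still requires bounding $\|u^\varepsilon\|_{\dot H^{1/2}}$ from above by the limit's norm, which amounts to strong convergence in $L^2_t\dot H^{1/2}$. Lower semicontinuity of $s\mapsto\|u(s,.)\|_{\dot H^{1/2}}$ lets you move $h$, but not $t$, where the inequality again points the wrong way.

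The missing idea is to \emph{restart the regularization at time $t$}.
\begin{itemize}
\item Assume $\|u(t,.)\|_{\dot H^{1/2}}<\infty$; otherwise there is nothing to prove.
\item By the second-moment identity and \eqref{eq:Linftyboundregu}, $u(t,.)\in\mathcal P_2(\R)\cap L^\infty(\R)$.
\item By uniqueness for the Dyson equation, $u$ restricted to $[t,h]$ is the solution issued from $u(t,.)$.
\item Solve the viscous problem on $[t,h]$ with datum $u(t,.)\ast\rho_\varepsilon$. Since $|\widehat{\rho_\varepsilon}|\le 1$, we have $\|u(t,.)\ast\rho_\varepsilon\|_{\dot H^{1/2}}\le\|u(t,.)\|_{\dot H^{1/2}}$.
\end{itemize}
The right-hand side is then bounded, uniformly in $\varepsilon$, by exactly the quantity appearing in \eqref{eq:final3/2]}.

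With this change, the rest of your limit argument goes through for every $h\ge t$, with no almost-everywhere restriction. That includes weak lower semicontinuity at the fixed time $h$, where the limit is identified through convergence in $C([t,h],\mathcal P(\R))$, and your identification of the weak limit of $(u^\varepsilon)^{3/2}$ in $L^2(t,h;H^1)$. This restart is what the paper does implicitly: it starts the second estimate from a time $t$ with $\|u(t,.)\|_{\dot H^{1/2}}<\infty$, and in the following corollary it applies the proposition with $u(t,.)$ as initial datum.
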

Let us mention that we add an $L^\infty$ assumption on the initial data but actually this is not really an hypothesis since we know from the $L^\infty$ regularization that it is true for every $t>0$. 
\begin{proof}
In a first part of the proof, we explain the main ideas to pass to the limit without going into details which are standard for these kind of estimates. In a second part, I give more explanation about how to rigorously pass to the limit for readers that are less familiar with these type of results. 
\newline
$\bullet$ We first prove the estimate \eqref{eq:finalentropy}. As explained, we consider $u^\varepsilon$ the strong smooth solution to \eqref{edpregularisée} with a smooth initial condition $u_0^\varepsilon:=u_0\ast \rho_\varepsilon$ where $\rho_\varepsilon$ is an approximation of the unity. 
\newline
We obtain by a similar computation as in Lemma \ref{lemma:entropy} that
\begin{equation}
\label{eq:entropylaplacien}
\cfrac{d}{dt}\int_\R u^\varepsilon(t,x)\log(u^\varepsilon(t,x))dx+||u^\varepsilon(t,.)||_{\dot{H}^{1/2}}^2+\varepsilon \int_\R \cfrac{|\nabla u^\varepsilon(t,x)|^2}{u_\varepsilon(t,x)}dx=0.
\end{equation}
We see that the add of the term $-\varepsilon \Delta$ just add an extra term in the a priori estimate. This extra term is known as the Fisher information and is clearly non negative and so we have for all $\varepsilon>0$
\begin{equation}
\label{eq:entropylaplacien2}
\cfrac{d}{dt}\int_\R u^\varepsilon(t,x)\log(u^\varepsilon(t,x))dx+||u^\varepsilon(t,.)||_{\dot{H}^{1/2}}^2\le 0.
\end{equation}
We rewrite this identity as follows for every $t>0$ and $\varepsilon>0$:
\begin{equation}
\label{eq:entropylaplacien3}
\int_\R u^\varepsilon(t,x)\log(u^\varepsilon(t,x))dx+\int_0^t||u^\varepsilon(s,.)||_{\dot{H}^{1/2}}^2ds\le \int_\R u_0^\varepsilon(x)\log(u_0^\varepsilon(x))dx.
\end{equation}
We now just want to pass to the limit. Despite being convex, the quantity $\int_\R f \log f$ is not non negative and so we have to pay attention. I shall follow the argument of Pierre-Louis Lions in his course at Collège de France \cite{lions2023cours}. 
\newline
To overcome this difficulty we use a standard transformation on this entropy term by rewriting it for $f\in\mathcal P_2(\R)$ as $$\int_\R f \log f=\int_\R f\log\left(\frac{f}{\gamma}\right)+\int_\R f\log(\gamma)$$ where $\gamma(x):=\exp(-x^2/2)/\sqrt{2\pi}$ is the density of the standard normal law. Indeed, the quantity $$\int_\R f\log\left(\frac{f}{\gamma}\right)=\int_\R \frac{f}{\gamma}\log\left(\frac{f}{\gamma}\right)\gamma$$ is non negative by the Jensen inequality and is known as the relative entropy of $f\in\mathcal P(\R)$ with respect to $\gamma$. This transformation is possible since we considered an initial condition in $\mathcal P_2(\R)$.  
We rewrite \eqref{eq:entropylaplacien3} with the relative entropy instead of the entropy
\begin{equation}
\label{eq:entropylaplacien4}
\begin{split}
\int_\R u^\varepsilon(t,x)\log\left(\cfrac{u^\varepsilon(t,x)}{\gamma(x)}\right)dx-\cfrac{1}{2}\int_\R& u^\varepsilon(t,x)x^2dx+\int_0^t||u^\varepsilon(s,.)||_{\dot{H}^{1/2}}^2ds\le\\
& \int_\R u^\varepsilon_0(x)\log\left(\cfrac{u^\varepsilon_0(x)}{\gamma(x)}\right)dx-\cfrac{1}{2}\int_\R u^\varepsilon_0(x)x^2dx.
\end{split}
\end{equation}
We can compute the second moment of a solution of \eqref{eq:dysonreg} 
$$\int_\R u^\varepsilon(t,x)x^2dx=(1+2\varepsilon)t+\int_\R u^\varepsilon_0(x)x^2dx.$$
This gives for every $t\ge 0$ and every $\varepsilon>0$ 
\begin{equation}
\label{eq:entropylaplacien5}
\begin{split}
\int_\R u^\varepsilon(t,x)\log\left(\cfrac{u^\varepsilon(t,x)}{\gamma(x)}\right)dx+\int_0^t||u^\varepsilon(s,.)||_{\dot{H}^{1/2}}^2ds\le \int_\R u^\varepsilon_0(x)\log\left(\cfrac{u^\varepsilon_0(x)}{\gamma(x)}\right)dx+\cfrac{1}{2}\,(1+2\varepsilon)t.
\end{split}
\end{equation}
Now we can justify the passage to the limit. Fix an horizon of time $T$ and choose $u_0^\varepsilon$ such that $$\int_\R u^\varepsilon_0(x)\log\left(\cfrac{u^\varepsilon_0(x)}{\gamma(x)}\right)dx \underset{\varepsilon\to 0}{\longrightarrow}  \int_\R u_0(x)\log\left(\cfrac{u_0(x)}{\gamma(x)}\right)dx.$$
Then, the right hand side of \eqref{eq:entropylaplacien5} is bounded uniformly in $\varepsilon>0$ and so deduce that $(u_\varepsilon)_\varepsilon$ is bounded in $L^2([0,T],H^{1/2})$ and so weakly converges up to extraction. 
\newline
Passing to the infimum limit in \eqref{eq:entropylaplacien5} when $\varepsilon$ goes to $0$ yields 
\begin{equation}
\label{eq:entropylaplacien6}
\begin{split}
\int_\R u(t,x)\log\left(\cfrac{u(t,x)}{\gamma(x)}\right)dx+\int_0^t||u(s,.)||_{\dot{H}^{1/2}}^2ds\le \int_\R u_0(x)\log\left(\cfrac{u_0(x)}{\gamma(x)}\right)dx+\cfrac{1}{2}\,t.
\end{split}
\end{equation}
Again using the second moment of a solution of the Dyson equation we obtain the inequality \eqref{eq:finalentropy}.
\newline
\newline
For the second estimate, we follow the same arguments. If we consider a $t>0$ such that $||u(t,.)||_{\dot{H}^{1/2}}<+\infty$. We consider $u^\varepsilon$ as before. Doing the same computation as in Lemma \ref{lemma:Cregulapriori}, we obtain that 
\begin{equation}
\label{eq:C1/3 bis}
\cfrac{1}{2}\,\cfrac{d}{ds}\, ||u^\varepsilon(s,.)||_{\dot{H}^{1/2}}^2+\cfrac{2}{9}\,||(u^{\varepsilon})^{3/2}(s,.)||_{\dot H^1}^2+\cfrac{1}{2} \int_\R((-\Delta)^{1/2}[u](s,x))^2u(s,x)dx +\varepsilon ||u^\varepsilon(s,.)||^2_{\dot H^{3/2}}=0.
\end{equation}
In particular, this gives for every $\varepsilon>0$ and $h\ge t$: 
\begin{equation}
\label{eq:C1/3 bisbis}
\cfrac{1}{2}\, ||u^\varepsilon(h,.)||_{\dot{H}^{1/2}}^2+\cfrac{2}{9}\,\int_t^h||(u^{\varepsilon})^{3/2}(s,.)||_{\dot H^1}^2ds\le \cfrac{1}{2}\, ||u^\varepsilon(t,.)||_{\dot{H}^{1/2}}^2.
\end{equation}
Hence $(u^\varepsilon(h,.))_{h\ge t}$ is bounded in $H^{1/2}$ and  $(u^{\varepsilon})^{3/2}$ in $L^2(t,h,H^1)$. So we can extract a subsequence that weakly converges in these spaces and we can pass to the infimum limit to obtain the result. In particular this implies that $||u^\varepsilon(h,.)||_{\dot{H}^{1/2}}$ is finite for every $h\ge t$ if $u(t,.)\in H_x^{1/2}$ and since by the first estimate $||u^\varepsilon(t,.)||_{\dot{H}^{1/2}}$ is finite for almost all $t>0$ we can conclude. 
\newline
\newline 
$\bullet$ We now give some complements to justify the limits. Indeed, from the notion of solutions introduced in \cite{bertucci2022spectral,bertucci2024spectral,bertucci2025new}, a priori we just know that $u^\varepsilon$ converges toward $u$ the solution of the Dyson equation (in the viscosity solutions sense) in $C([0,T],\mes)$ where $\mes$ is endowed with the topology of weak convergence. 
\newline
To make this convergence stronger, we can notice from the first estimate that $(u^\varepsilon)_\varepsilon$ is bounded in $L^2(0,T,H^{1/2})$. Moreover, $u^\varepsilon$ solves $$\partial_t u^\varepsilon+\partial_x(u^\varepsilon H[u^\varepsilon])-\varepsilon \Delta u_\varepsilon=0$$ and we know that $(u^\varepsilon)_\varepsilon$ is bounded in all $L^p$ so we deduce that $u^\varepsilon H[u^\varepsilon]$ is bounded in $L^2$ and so $\partial_x(u^\varepsilon H[u^\varepsilon])$ is bounded in $H^{-1}$. Hence, we get that $(u^\varepsilon)_\varepsilon$ is bounded in $L^2(0,T,H^{-2})$. A standard application of the Aubin-Lions-Simon lemma gives that up to extraction $(u^\varepsilon)_\varepsilon$ converges strongly in $L^2(0,T,L_{\text{loc}}^2)$ and so again up to a diagonal extraction procedure almost everywhere. This makes rigorous the passage to the limit using the Fatou Lemma in the entropy term in \eqref{eq:finalentropy}. To rigorously conclude this inequality we have to justify that the weak limit of $(u^\varepsilon)_\varepsilon$ in $L^2(0,T,H^{1/2})$ is $u$, the solution of the Dyson equation. We call $w$ this weak limit point. In particular $(u^\varepsilon)_\varepsilon$ weakly converges toward $w$ in $L^2(0,T,L^2)$. We recall that more generally in an Hilbert space $\mathcal H$ if $(x_n)_{n\in\N}\in \mathcal H^\N$ is a bounded sequence of $\mathcal H$ and $x\in\mathcal H$ are such that $\langle x_n,z\rangle\underset{n\to+\infty}{\longrightarrow}\langle x,z\rangle$ for all $z$ in a dense subset of $\mathcal H$, then $(x_n)_{n\in\N}$ weakly converges toward $x$. If we test $u_\varepsilon$ against a $C^\infty$ compactly supported function, it converges toward $u$ tested against this test function by the convergence in $C([0,T],\mes)$. This complete the first inequality. 
\newline
For the second one, the main problem is again to justify that if we again denote $w$ the weak limit point of $((u^\varepsilon)^{3/2})_\varepsilon$ in $L^2(t,h,H^1)$, we have $w=u^{3/2}$ where $u$ is the solution of the Dyson equation. Again $((u^\varepsilon)^{3/2})_\varepsilon$ weakly converges toward $w$ in $L^2(t,h,L^2)$ and we have that $u^{3/2}$ is in $L^2(t,h,L^2)$ since $u$ is bounded in time in $L_x^3$. Moreover, since we have for every $y,z\in\R^+$ that $$|y^{3/2}-z^{3/2}|\le C \sqrt{\max(y,z)}|y-z|$$  we obtain that for every compact $K$ $$\int_{t}^{h}\int_K \left|(u^{\varepsilon})^{3/2}-u^{3/2}\right|^{2}\le C \int_{t}^{h}\int_K \left|u^{\varepsilon}-u\right|^{2}$$ using the $L^\infty$ bounds \eqref{eq:Linftyboundregu} of $u$ and $u_\varepsilon$ that are independent of $\varepsilon$ and the initial condition and of time on the interval $(t,h)$. 
From the Aubin-Lions-Simon lemma stated below we deduce that for all compact $K$ $$\int_{t}^{h}\int_K \left|(u^{\varepsilon})^{3/2}-u^{3/2}\right|^{2}\underset{\varepsilon\to 0}{\longrightarrow}0.$$
From this and the Cauchy-Schwarz inequality, we deduce that$(u^{\varepsilon})^{3/2}$ weakly converges toward $u^{3/2}$ in $L^2(t,h,L^2)$ and this makes rigorous the convergence for the second inequality.  
\end{proof}

\begin{corollary}
Let $u_0\in\mathcal P_2(\R)$ and $u$ be the solution of the Dyson equation with initial condition $u_0$ then for every $t>0$ $u(t,.)\in H^{1/2}$ and $$\int_{t}^{+\infty}||u^{3/2}(s,.)||_{\dot H^{1}}^2 ds<\infty.$$
\end{corollary}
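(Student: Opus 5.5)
The plan is to reduce to the Proposition by restarting the equation at a small positive time, using the $L^\infty$ regularization \eqref{eq:Linftyboundregu} and the semigroup property of the flow.

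Fix $t>0$ and pick $\tau\in(0,t)$. By \eqref{eq:Linftyboundregu}, $u(\tau,\cdot)$ has a density bounded by $C/\sqrt{\tau}$. Since $u_0\in\mathcal P_2(\R)$, the second-moment identity gives $u(\tau,\cdot)\in\mathcal P_2(\R)$. Hence $u(\tau,\cdot)\in\mathcal P_2(\R)\cap L^\infty(\R)$. By uniqueness of solutions in the framework of Bertucci et al., $s\mapsto u(\tau+s,\cdot)$ is the solution with initial condition $u(\tau,\cdot)$, so the Proposition applies to it.

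\emph{Step 1: $u(t,\cdot)\in H^{1/2}$ for every $t>0$.} The entropy of $u(\tau,\cdot)$ is finite from above, because $u\log u\le u\log\|u\|_{L^\infty}$. It is finite from below, because the finite second moment gives $\int u\log u\ge \int u\log\gamma>-\infty$, by nonnegativity of the relative entropy. Applying \eqref{eq:finalentropy} on $[\tau,t]$ gives
\[
\int_\tau^t\|u(s,\cdot)\|_{\dot H^{1/2}}^2\,ds\le \int u(\tau)\log u(\tau)-\int u(t)\log u(t).
\]
The last term is bounded using the same lower bound, now with the second moment at time $t$. Since the integral is finite, there exists $s_0\in(\tau,t)$ with $\|u(s_0,\cdot)\|_{\dot H^{1/2}}<\infty$. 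Applying \eqref{eq:final3/2} from $s_0$ to $h=t$ gives $\|u(t,\cdot)\|_{\dot H^{1/2}}\le\|u(s_0,\cdot)\|_{\dot H^{1/2}}<\infty$. Together with $u(t,\cdot)\in L^1\cap L^\infty\subset L^2$, this shows $u(t,\cdot)\in H^{1/2}$.

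\emph{Step 2: the time-integrated bound.} Apply \eqref{eq:final3/2} starting at time $t$ (now known to be admissible) up to an arbitrary $h\ge t$:
\[
\frac49\int_t^h\|u^{3/2}(s,\cdot)\|_{\dot H^1}^2\,ds\le\|u(t,\cdot)\|_{\dot H^{1/2}}^2 .
\]
The right-hand side does not depend on $h$, so letting $h\to+\infty$ by monotone convergence gives the claim.

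The main obstacle is that the Proposition is stated with the hypothesis $u_0\in\mathcal P_2\cap L^\infty$ and with initial time $0$. To use it I need two facts. First, the restarted solution coincides with the original one; this follows from the semigroup or uniqueness property of viscosity solutions. Second, inequality \eqref{eq:final3/2} is valid from any starting time $s_0$ at which the $\dot H^{1/2}$ norm is finite, which is exactly how it was derived in the proof via the $\varepsilon$-regularization. The finiteness of the entropy at time $\tau$, needed to start Step 1, is guaranteed by the $L^\infty$ bound combined with the second moment.
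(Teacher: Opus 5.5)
Your proposal is correct and follows essentially the same route as the paper: use the $L^\infty$ regularization to apply the Proposition from a positive time, deduce from \eqref{eq:finalentropy} that $\|u(s,\cdot)\|_{\dot H^{1/2}}$ is finite for almost every $s$, propagate this to every $t>0$ via the monotonicity in \eqref{eq:final3/2}, and let $h\to+\infty$. You are more explicit than the paper on three points it leaves implicit: restarting the flow at $\tau$ via uniqueness, the finiteness of the entropy at time $\tau$ from the $L^\infty$ bound and the second moment, and the $L^2$ part of the $H^{1/2}$ norm.
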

\begin{proof}
Since for every $t>0$ the solution of the Dyson equation is in $L_x^\infty$, we can apply Proposition \ref{prop:estiméespreuves} for $t>0$. The first inequality implies that $u(t,.)\in H_x^{1/2}$ for almost all $t>0$ and the second one implies that $t\to ||u(t,.)||^2_{\dot H^{1/2}}$ is non-increasing on $(0,+\infty)$. So, for all $t>0$ $u(t,.)$ is in $H_x^{1/2}$. From the second estimate, we also deduce that for all $t>0$ $$\int_{t}^{+\infty}||u^{3/2}(s,.)||_{\dot H^{1}}^2 ds\le ||u(t,.)||^2_{\dot H^{1/2}}<+\infty.$$
\end{proof}
As an immediate consequence of the Sobolev embedding of $H^1(\R)$ in $C^{1/2}(\R)$ and Lemma \ref{lemma:Calpha} we obtain the following result
\begin{corollary}
Let $u_0\in \mathcal P_2(\R)$ then the solution of \eqref{eq:dysonreg} with initial condition $u_0$ satisfies that for every $T\ge t>0$:
$$\int_t^{T}||u(s,.)||_{C^{1/3}}^3 ds <+\infty.$$ In particular, for almost all $s>0$, $u(s,.)\in C^{1/3}$.
\end{corollary}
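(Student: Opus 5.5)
The plan is to combine the previous corollary with the one-dimensional Sobolev embedding and Lemma \ref{lemma:Calpha}. Fix $T\ge t>0$. By the previous corollary, $\int_t^{+\infty}\|u^{3/2}(s,.)\|_{\dot H^1}^2\,ds<\infty$, so in particular $\|u^{3/2}(s,.)\|_{\dot H^1}<+\infty$ for almost every $s\in(t,T)$.

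For such $s$ we apply the homogeneous Morrey embedding in dimension one. For any $f$ with $\partial_x f\in L^2(\R)$ and $x<y$, Cauchy–Schwarz gives
\[
|f(x)-f(y)|=\Big|\int_x^y \partial_x f\Big|\le |x-y|^{1/2}\,\|\partial_x f\|_{L^2}.
\]
Hence $\|f\|_{C^{1/2}}\le \|f\|_{\dot H^1}$. Only the seminorm enters, so no $L^2$ control of $u^{3/2}$ is required. This already holds for $u^{3/2}(s,.)\in L^\infty$ by \eqref{eq:Linftyboundregu}. The one technical point is that $u^{3/2}(s,.)$, as a weak limit, only has a distributional derivative in $L^2$. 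It therefore coincides almost everywhere with an absolutely continuous representative, and we work with that continuous representative of $u(s,.)$.

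Applying Lemma \ref{lemma:Calpha} to $f=u(s,.)\ge 0$ gives
\[
\|u(s,.)\|_{C^{1/3}}\le C\|u^{3/2}(s,.)\|_{C^{1/2}}^{2/3}\le C\|u^{3/2}(s,.)\|_{\dot H^1}^{2/3}.
\]
Cubing yields $\|u(s,.)\|_{C^{1/3}}^3\le C^3\|u^{3/2}(s,.)\|_{\dot H^1}^2$. Integrating over $(t,T)$ and using the previous corollary then gives $\int_t^T\|u(s,.)\|_{C^{1/3}}^3ds\le C^3\|u(t,.)\|_{\dot H^{1/2}}^2<+\infty$. Measurability of $s\mapsto \|u(s,.)\|_{C^{1/3}}$ can be sidestepped by reading this as an upper integral bound, or by noting that the supremum can be taken over rational pairs $x\ne y$ once the continuous representative is used.

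The almost-everywhere statement follows directly. A nonnegative function with finite integral on $(t,T)$ is finite almost everywhere there. Taking $t=1/n$, $T=n$ and a countable union over $n$ shows that $u(s,.)\in C^{1/3}$ for almost all $s>0$. The only delicate step is the choice of the continuous representative so that the pointwise Hölder seminorm makes sense; the rest is bookkeeping.
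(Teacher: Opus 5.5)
Your proposal is correct and takes the same route as the paper. The paper obtains the corollary directly from the previous corollary, the embedding of $H^1(\R)$ into $C^{1/2}(\R)$ and Lemma \ref{lemma:Calpha}, giving $\|u(s,.)\|_{C^{1/3}}^3\le C\|u^{3/2}(s,.)\|_{\dot H^1}^2$ before integrating in time; your remarks on the homogeneous form of the embedding, the continuous representative and measurability in $s$ fill in details the paper leaves implicit.
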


\subsection{Extension in the case of the Dyson equation with a drift}
We can obtain a similar result in the case of the add of a drift $b$ with some standard assumptions on $b$.
We call  Dyson equation with a drift $b$ the following PDE:
\begin{equation}
\label{eq:dysonwithadriftregu}
\partial_t u+\partial_x(u(H[u]+b))=0.
\end{equation}
A typical example in which such PDEs appear is when instead of just considering in random matrices the Dyson Brownian motion, we consider standard Ornstein-Uhlenbeck processes instead of Brownian motions. In this case, the drift is $b(x)=x=V'(x)$ is associated to a confining potential $V(x)=x^2/2$.
\begin{proposition}
Assume that $b\in L^\infty_t(W^{1,+\infty}_x)$ then $u$; the solution of the Dyson equation with drift $b$ and initial condition $u_0\in \mathcal P_2(\R)$;
satisfies the following estimates for every $h\ge t>0$: 
\begin{equation}
\label{eq:finalentropydrift}
\int_\R u(h,x)\log(u(h,x))dx+\int_t^h ||u(s,.)||_{\dot H^{1/2}}^2 ds\le \int_\R u_t(x)\log(u(t,x))dx+(h-t)\sup_{s\ge 0}\|\partial_x b(s,.)\|_\infty
\end{equation}
\begin{equation}
\label{eq:final3/2drift}
\begin{split}
\cfrac{1}{2}\,||u(h,.)||_{\dot H^{1/2}}^2+\cfrac{2}{9}\int_t^h ||u^{3/2}(s,.)||_{\dot H^{1}}^2 ds\le \cfrac{1}{2}\,||u(t,.)&||_{\dot H^{1/2}}^2+ \sup_{s\ge 0}||\partial_x b(s,.)||_{\infty}^2(h-t)+\\
&2\sup_{s\ge 0}||\partial_x b(s,.)||_\infty \int_t^h||u(s,.)||_{\dot H^{1/2}}^2ds.
\end{split}
\end{equation}
\end{proposition}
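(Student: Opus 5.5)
Following the earlier proposition, I would first derive formal a priori identities for smooth solutions of \eqref{eq:dysonwithadriftregu} vanishing at infinity. I would then make them rigorous through the viscous regularization $\partial_t u^\varepsilon+\partial_x(u^\varepsilon(H[u^\varepsilon]+b))-\varepsilon\partial_{xx}^2u^\varepsilon=0$ with mollified data, exactly as before. The Fisher information and $\varepsilon\|u^\varepsilon\|_{\dot H^{3/2}}^2$ terms have a good sign and are discarded. Passage to the limit uses Aubin--Lions--Simon, lower semicontinuity of the norms, and identification of the weak limits; the $L^\infty$ regularization is assumed to persist with a bounded Lipschitz drift. Write $L:=\sup_s\|\partial_x b(s,.)\|_\infty$.

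\textbf{Entropy estimate.} Repeating Lemma \ref{lemma:entropy}, the drift adds
\begin{align*}
-\int_\R \partial_x(ub)(\log u+1)\,dx=\int_\R b\,\partial_x u\,dx=-\int_\R u\,\partial_x b\,dx\le L,
\end{align*}
using $\int_\R u=1$. Integrating from $t$ to $h$ gives \eqref{eq:finalentropydrift}. For the limit I would use the relative-entropy trick with $\gamma$, which needs the second moment. With drift one only gets $\frac{d}{dt}\int_\R x^2u\le C(1+\int_\R x^2u)$, since $|b(x)|\le C(1+|x|)$, so the second moment stays finite on bounded intervals. This suffices for the argument.

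\textbf{$\dot H^{1/2}$ estimate.} Repeating Lemma \ref{lemma:Cregulapriori}, the extra contribution to $\frac12\frac{d}{dt}\|u\|_{\dot H^{1/2}}^2$ is
\begin{align*}
-\int_\R \partial_x(ub)\,(-\Delta)^{1/2}u\,dx=-\int_\R u\,\partial_x b\,(-\Delta)^{1/2}u\,dx-\int_\R b\,\partial_x u\,(-\Delta)^{1/2}u\,dx.
\end{align*}
The first term is handled by Young's inequality:
\begin{align*}
\Big|\int_\R u\,\partial_x b\,(-\Delta)^{1/2}u\Big|\le \frac12\int_\R u\,((-\Delta)^{1/2}u)^2+\frac{L^2}{2}\int_\R u.
\end{align*}
Its first part is absorbed by the good term $\frac12\int_\R u((-\Delta)^{1/2}u)^2$ of Lemma \ref{lemma:Cregulapriori}, and its second part equals $\frac{L^2}{2}$, which is within the $L^2(h-t)$ allowed in \eqref{eq:final3/2drift}. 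Discarding the dissipative term costs nothing, since it is nonnegative.

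The second term is the main obstacle: a commutator between multiplication by $b$ and $(-\Delta)^{1/2}=H\partial_x$. Writing $g=\partial_x u$, it equals $-\int_\R b\,g\,H[g]$. Since $H$ is antisymmetric, $\int_\R b\,g\,H[g]=\frac12\int_\R g\,[b,H]g$. I would then invoke the Calderón commutator estimate $\|[b,H]\partial_x f\|_{L^2}\le C\|\partial_x b\|_\infty\|f\|_{L^2}$. After moving half a derivative, or by the Kato--Ponce / Coifman--Meyer form $\big|\int_\R b\,g\,H[g]\big|\le C\|\partial_x b\|_\infty\|u\|_{\dot H^{1/2}}^2$, this gives the $2L\|u\|_{\dot H^{1/2}}^2$ term. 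I expect to spend most effort verifying the constant, which should be computable from the kernel, since $\int_\R b\,g\,H[g]$ has the symmetric kernel $\frac{b(x)-b(y)}{x-y}$ acting on $\partial_x u\otimes\partial_y u$. A cleaner route is to integrate by parts in $x$ and $y$ to reach
\begin{align*}
\iint_{\R^2} \frac{(u(x)-u(y))^2}{(x-y)^2}\,\partial_x b\text{-type kernels},
\end{align*}
which is bounded by $L\|u\|_{\dot H^{1/2}}^2$ up to a constant.

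Collecting the terms yields the differential inequality
\begin{align*}
\frac12\frac{d}{ds}\|u\|_{\dot H^{1/2}}^2+\frac29\|u^{3/2}\|_{\dot H^1}^2\le L^2+2L\|u\|_{\dot H^{1/2}}^2 .
\end{align*}
Integrating over $(t,h)$ gives \eqref{eq:final3/2drift}, and the limit $\varepsilon\to0$ is justified exactly as in the drift-free case.
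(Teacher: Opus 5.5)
Your proposal is correct. For the entropy estimate, and for the splitting $\partial_x(ub)=u\,\partial_x b+b\,\partial_x u$, it agrees with the paper. Your Young step absorbs all of $\frac12\int_\R u\,((-\Delta)^{1/2}u)^2$ at cost $L^2/2$. The paper instead takes $\varepsilon=\sqrt2$, pays $L^2$ and keeps a quarter of that term. Both fit the statement.

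The genuine difference is in the term $-\int_\R b\,\partial_x u\,(-\Delta)^{1/2}u$.

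\emph{The paper's route.} It puts $(-\Delta)^{1/4}$ on each factor and writes $(-\Delta)^{1/4}(b\,\partial_x u)=[(-\Delta)^{1/4},b]\partial_x u+b\,(-\Delta)^{1/4}\partial_x u$. The second piece is integrated by parts as a transport term. The first is bounded by a cited Kato--Ponce commutator estimate, with the commutator constant suppressed so as to land on $2L$.

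\emph{Your route.} You use the antisymmetry of $H$ to write the whole term as $-\frac{1}{2\pi}\iint_{\R^2}K(x,y)\,\partial_x u(x)\,\partial_y u(y)\,dx\,dy$, with $K(x,y)=\frac{b(x)-b(y)}{x-y}$. This kernel route is elementary and actually justifies the constant. Write $\partial_x u(x)\,\partial_y u(y)=-\frac12\partial_x\partial_y\big[(u(x)-u(y))^2\big]$ and integrate by parts once in each variable. The diagonal boundary terms vanish, since $K$ is bounded, $\partial_x K=O(L/|x-y|)$ and $u(x)-u(y)=O(|x-y|)$. You then only need, with $b'=\partial_x b$,
\[
\partial_x\partial_y K(x,y)=\frac{b'(x)+b'(y)-2K(x,y)}{(x-y)^2},\qquad |\partial_x\partial_y K(x,y)|\le\frac{4L}{(x-y)^2},
\]
which involves only one derivative of $b$. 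Combined with $\|u\|_{\dot H^{1/2}}^2=\frac{1}{2\pi}\iint_{\R^2}\frac{(u(x)-u(y))^2}{(x-y)^2}\,dx\,dy$, this gives exactly $\big|\int_\R b\,\partial_x u\,(-\Delta)^{1/2}u\,dx\big|\le 2L\,\|u\|_{\dot H^{1/2}}^2$, with no unspecified constant.

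The Calder\'on-commutator phrasing you mention first also works. However, it needs an extra interpolation step to move the half derivative: boundedness of $[b,H]$ from $\dot H^{-1/2}$ to $\dot H^{1/2}$, obtained from the $L^2$ bounds on $[b,H]\partial_x$ and $\partial_x[b,H]$. So the kernel computation is the one to write down.

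One caveat, which the paper's proof shares. Unlike the drift-free case, the right-hand side of the second estimate contains $2L\int_t^h\|u^\varepsilon(s,.)\|_{\dot H^{1/2}}^2\,ds$. Weak lower semicontinuity does not carry this term to the limit, so the passage $\varepsilon\to0$ is not literally ``exactly as in the drift-free case''. It requires strong convergence in $L^2(t,h;\dot H^{1/2})$ or some additional argument.
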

\begin{proof}
We focus on a priori estimates that can be made rigorous as in Proposition \ref{prop:estiméespreuves}.
\newline
$\bullet$ For the first estimate, doing exactly the same computations as in Lemma \ref{lemma:entropy}, we obtain 
\begin{align*}
\cfrac{d}{dt}\int_\R u(t,x)\log(u(t,x))dx+||u(t,.)||_{\dot{H}^{1/2}}^2&=\int_\R\partial_x u(t,x)b(t,x)dx\\
&=-\int_\R u(t,x)\partial_x b(t,x)dx\\
&\le \sup_{s\ge 0}\|\partial_x b(s,.)\|_{\infty}||u(t,.)||_{L^1}=\sup_{s\ge 0}\|\partial_x b(s,.)\|_\infty.
\end{align*}
\newline
$\bullet$ Again doing similar computations as in Lemma \ref{lemma:Cregulapriori} we obtain: 
\begin{equation}
\label{eq:C1/3drift}
\cfrac{1}{2}\,\cfrac{d}{dt}\, ||u(t,.)||_{\dot{H}^{1/2}}^2+\cfrac{2}{9}\,||u^{3/2}(t,.)||_{\dot H^1}^2+\cfrac{1}{2} \int_\R((-\Delta)^{1/2}[u](t,x))^2u(t,x)dx =-\int_\R\partial_x(u b)(-\Delta)^{1/2}(u)dx.
\end{equation}
We now have to bound the right hand side coming from the add of the drift. The techniques that shall be used are standard for the estimation of $H^s$ norms of solutions of continuity equations.
\newline 
We compute 
$$-\int_\R\partial_x(u b)(-\Delta)^{1/2}(u)dx=-\int_\R\partial_x u b (-\Delta)^{1/2}(u)dx-\int_\R u \partial_x b(-\Delta)^{1/2}(u)dx=:I_1+I_2.$$
We start to bound $I_2$ using that $xy\le \frac{\varepsilon^2}{2} x^2+\frac{1}{2\varepsilon^2} y^2$: 
\begin{align*}
I_2&\le \cfrac{\varepsilon^2}{2}\int_\R (\partial_x b)^2 u dx+\cfrac{1}{2\varepsilon^2}\int_\R ((-\Delta)^{1/2}(u))^2u\\
&\le \cfrac{\varepsilon^2\sup_{s\ge 0}||\partial_x b(s,.)||_{\infty}^2}{2}\int_\R  u dx+\cfrac{1}{2\varepsilon^2}\int_\R ((-\Delta)^{1/2}(u))^2u\\
&=\cfrac{\varepsilon^2\sup_{s\ge 0}||\partial_x b(s,.)||_{\infty}^2}{2}+\cfrac{1}{2\varepsilon^2}\int_\R ((-\Delta)^{1/2}(u))^2u,
\end{align*}
since $u(t,.)\in\mes$ for every $t\ge 0$.
\newline
Choosing $\varepsilon=\sqrt{2}$ and inserting the bound of $I_2$ in \eqref{eq:C1/3drift} we get 
\begin{equation}
\label{eq:C1/3driftbis}
\cfrac{1}{2}\,\cfrac{d}{dt}\, ||u(t,.)||_{\dot{H}^{1/2}}^2+\cfrac{2}{9}\,||u^{3/2}(t,.)||_{\dot H^1}^2+\cfrac{1}{4} \int_\R((-\Delta)^{1/2}[u](t,x))^2u(t,x)dx \le\sup_{s\ge 0}||\partial_x b(s,.)||_{\infty}^2+I_1.
\end{equation}
It remains to bound $I_1$. To do so, we shall use some standard commutator estimates called Kato-Ponce commutator estimates (or sometimes Kato-Ponce-Vega-Kenig or even fractional Leibnitz rule see for instance \cite{li2019kato}).
First, we rewrite $I_1$ using a commutator:

\begin{align*}
I_1&=-\int_\R(-\Delta)^{1/4}[\partial_x u b] (-\Delta)^{1/4}(u)dx\\
&=-\int_\R[(-\Delta)^{1/4},b](\partial_x u) (-\Delta)^{1/4}(u)dx-\int_\R b(-\Delta)^{1/4}[\partial_x u] (-\Delta)^{1/4}(u)dx\\
&=:I_1^1+I_1^2,
\end{align*}
where more generally $[(-\Delta)^s,f](g):=(-\Delta)^s(fg)-f(-\Delta)^s(g)$ for every functions $f$ and $g$.
We bound $I_1^2$: 

\begin{align*}
I_1^2&=-\cfrac{1}{2}\int_\R b\partial_x(((-\Delta)^{1/4}(u))^2)dx\\
&=\cfrac{1}{2}\int_\R \partial_x b((-\Delta)^{1/4}(u))^2dx
\le \sup_{s\ge 0}||\partial_x b(s,.)||_\infty \int_\R ((-\Delta)^{1/4}(u))^2dx \\
&= \sup_{s\ge 0}||\partial_x b(s,.)||_\infty ||u(t,.)||_{\dot H^{1/2}}^2
\end{align*}
For $I_1^1$, the Cauchy-Schwarz inequality and Kato-Ponce commutator estimates yields:
\begin{align*}
|I_1^1|&\le ||(-\Delta)^{1/4}(u)||_{L^2}||(-\Delta)^{1/4},b](\partial_x u)||_{L^2}\\
&\le ||u(t,.)||_{\dot H^{1/2}}\sup_{s\ge 0}||\partial_x b(s,.)||_\infty|| \,||(-\Delta)^{\frac{1}{4}-1}(\partial_x u)||_{L^2}\\
&=||u(t,.)||_{\dot H^{1/2}} \sup_{s\ge 0}||\partial_x b(s,.)||_\infty|| \,||u(t,.)||_{\dot H^{1/2}}\le\sup_{s\ge 0}||\partial_x b(s,.)||_\infty \,||u(t,.)||_{\dot H^{1/2}}^2
\end{align*}
Hence, inserting these bounds in \eqref{eq:C1/3driftbis}, we get
\begin{equation}
\label{eq:C1/3driftbisbis}
\begin{split}
\cfrac{1}{2}\,\cfrac{d}{dt}\, ||u(t,.)||_{\dot{H}^{1/2}}^2+\cfrac{2}{9}\,||u^{3/2}(t,.)||_{\dot H^1}^2&+\cfrac{1}{4} \int_\R((-\Delta)^{1/2}[u](t,x))^2u(t,x)dx \\&\le \sup_{s\ge 0}||\partial_x b(s,.)||_{\infty}^2+2\sup_{s\ge 0}||\partial_x b(s,.)||_\infty \,||u(t,.)||_{\dot H^{1/2}}^2.
\end{split}
\end{equation}
Let us mention that in this case, we can not say that the $H^{1/2}$ norm of a solution of \eqref{eq:dysonwithadriftregu} is non-increasing with respect to time but since 
\begin{equation}
\label{eq:C1/3driftbisbisbis}
\cfrac{1}{2}\,\cfrac{d}{dt}\, ||u(t,.)||_{\dot{H}^{1/2}}^2 \le \sup_{s\ge 0}||\partial_x b(s,.)||_{\infty}^2+2\sup_{s\ge 0}||\partial_x b(s,.)||_\infty \,||u(t,.)||_{\dot H^{1/2}}^2,
\end{equation}
the Grönwall lemma implies that for all $h\ge t$ 
$$ ||u(h,.)||_{\dot{H}^{1/2}}^2 \le \left( ||u(t,.)||_{\dot{H}^{1/2}}^2 +\frac{\sup_{s\ge 0}||\partial_x b(s,.)||_{\infty}}{2}\right)e^{4\sup_{s\ge 0}||\partial_x b(s,.)||_{\infty}(h-t)}-\cfrac{\sup_{s\ge 0}||\partial_x b(s,.)||_{\infty}}{2}.$$
In particular if $u(t,.)\in H_x^{1/2}$ then for every time $h\ge t$ we also have $u(h,.)\in H_x^{1/2}$.
\newline
Moreover, integrating \eqref{eq:C1/3driftbisbis} from $t$ to $h$ gives \eqref{eq:final3/2drift}.
\end{proof}
\section{Application to the long-time behaviour of solutions of the periodic Dyson equation}
In this section, we give an application of the regularization proved in the previous part for the case of the periodic Dyson equation:
\begin{equation}
\label{eq:dysoncirculairedp}
\partial_t u+\partial_x(uH[u])=0 \text{ in } (0,+\infty)\times \T,
\end{equation}
where $\T=\R/2\pi\Z$ is the circle and $$H[u](x)=\frac{1}{2\pi}\int_\T \cotan\left(\frac{x-y}{2}\right)(u(y)-u(x))dy$$ is the periodic Hilbert transform. Again we look at solutions of \eqref{eq:dysoncirculairedp} with initial condition in $\mesT$ so that the solution is in $\mesT$ for every time.
\newline
The notion of viscosity solutions for this PDE was studied in \cite{bertucci2025new} and a $L^\infty$ regularization was also proved, as in the real case. It was proved that the maximum of solutions of \eqref{eq:dysoncirculairedp} converges toward $\frac{1}{2\pi}$ and that solutions converge in all $L^p$ for $1\le p<+\infty$ toward the uniform distribution on the circle. The convergence in $L^\infty$ norm of solutions of \eqref{eq:dysoncirculairedp} toward the uniform distribution was an open question in \cite{bertucci2025new}.
\newline
Doing exactly the same computations and arguments, the results of Section \ref{section:regularitysolutions} can also obtained  for solutions of \eqref{eq:dysoncirculairedp}.
\newline
Hence, we can without loss of generality suppose that the solutions of the Dyson equation are in $L^\infty$ and satisfy:
\begin{equation}
\label{Dysonhyp}
\int_0^{+\infty}||u^{3/2}(s,.)||_{\dot H^1}^2 ds <+\infty.
\end{equation} 
This has two consequences. First, for almost all $t>0$
$||u^{3/2}(t,.)||_{\dot H^1}^2<+\infty$ and so $u^{3/2}(t,.)\in H^1$ for almost all $t>0$. In particular, the Sobolev embedding implies that for almost all $t>0$ $u^{3/2}(t,.)$ is $C^{1/2}$ and so $u(t,.)$ is $C^{1/3}$ for almost all $t>0$ by Lemma \ref{lemma:Calpha}. 
\newline 
Secondly \eqref{Dysonhyp} implies that we can find $K>0$ and a sequence of time $t_n\underset{n\to+\infty}{\longrightarrow}+\infty$ such that $||u^{3/2}(t_n,.)||_{\dot H^1}^2\le K$ for all $n\ge 0$.
Again by the Sobolev embedding and Lemma \ref{lemma:Calpha} there exists a constant (still denoted $K$) such that for all $n\ge 0$
\begin{equation}
\label{eq:contradictionavenir}
||u(t_n,.)||_{C^{1/3}}\le K.
\end{equation} 
\begin{proposition}
\label{prop:estiméespreuves}
Let $u$ be a solution of the periodic Dyson equation that satisfies \eqref{eq:contradictionavenir}, then there exists a time $t>0$ such that $\min_{x\in\T}u(t,x)>0$. 
\end{proposition}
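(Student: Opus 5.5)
The plan is to argue by contradiction, combining the uniform Hölder bound \eqref{eq:contradictionavenir} along the sequence $t_n$ with the known $L^p$ convergence toward the uniform distribution from \cite{bertucci2025new}.

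Suppose that for every $t>0$ we have $\min_{x\in\T}u(t,x)=0$. In particular, for each $n$ there is a point $x_n\in\T$ with $u(t_n,x_n)=0$; since $u(t_n,\cdot)$ is continuous by \eqref{eq:contradictionavenir}, the minimum is attained. The bound \eqref{eq:contradictionavenir} then gives, for every $x\in\T$,
$$u(t_n,x)\le K|x-x_n|^{1/3},$$
where $|\cdot|$ is the distance on the circle. So on the arc $I_n:=\{x:|x-x_n|\le \delta\}$ we have $u(t_n,\cdot)\le K\delta^{1/3}$. Choose $\delta>0$, independent of $n$, so small that $K\delta^{1/3}\le \frac{1}{4\pi}$. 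Then on $I_n$, whose length $2\delta$ does not depend on $n$,
$$\Big|u(t_n,x)-\frac{1}{2\pi}\Big|\ge \frac{1}{4\pi}.$$
Consequently
$$\Big\|u(t_n,\cdot)-\frac{1}{2\pi}\Big\|_{L^1(\T)}\ge \frac{2\delta}{4\pi}=\frac{\delta}{2\pi}>0 \quad\text{for all } n.$$
This contradicts the convergence $u(t,\cdot)\to\frac{1}{2\pi}$ in $L^1(\T)$ as $t\to+\infty$ proved in \cite{bertucci2025new}, since $t_n\to+\infty$. Hence there is some $t_n$ (indeed all $t_n$ with $n$ large) at which $\min_{x\in\T}u(t_n,x)>0$.

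The step that needs care is that the sequence $t_n$ and the constant $K$ are fixed before the contradiction hypothesis is used. We only need the negation at the times $t_n$, and this is what makes $\delta$ uniform in $n$, so the argument is consistent. Everything else is elementary. The only genuinely needed inputs are the uniform $C^{1/3}$ bound along $t_n$, which gives continuity and hence an attained minimum, and the $L^1$ (or any $L^p$, $p<\infty$) convergence from \cite{bertucci2025new}.
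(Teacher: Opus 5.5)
Your proof is correct. It uses the same mechanism as the paper: a zero of $u(t_n,\cdot)$ together with the uniform $C^{1/3}$ bound is incompatible with $u(t_n,\cdot)$ being close to $\frac{1}{2\pi}$ on most of the circle. However, you run the contradiction in the opposite direction and with a different input.

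The paper uses the convergence of the maximum, $M(t)\to\frac{1}{2\pi}$, together with mass conservation. This shows that the sublevel set $I(\varepsilon',t)=\{u(t,\cdot)<\frac{1}{2\pi}-\varepsilon'\}$ has Lebesgue measure tending to $0$. Hence the arc $J(\varepsilon,t)$ around a zero must contain a point where $u\ge\frac{1}{2\pi}-\varepsilon'$, which forces $\|u(t,\cdot)\|_{C^{1/3}}\ge(\frac{1}{2\pi}-\varepsilon')\varepsilon^{-1/3}>K$ once $\varepsilon$ is small.

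You instead let the Hölder bound produce an arc of fixed length $2\delta$ on which $u(t_n,\cdot)\le\frac{1}{4\pi}$. You then contradict the $L^1$ convergence quoted from \cite{bertucci2025new}.

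The two inputs are essentially interchangeable. Since $\int_\T(u-\frac{1}{2\pi})=0$, one has $\int_\T|u-\frac{1}{2\pi}|=2\int_\T(u-\frac{1}{2\pi})^+\le 4\pi\,(M(t)-\frac{1}{2\pi})$, so convergence of the maximum gives $L^1$ convergence. Conversely, $L^1$ convergence gives the convergence in measure that the paper extracts from the mass-counting bound on $\leb(I(\varepsilon',t))$.

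Your route has a few advantages:
\begin{itemize}
\item It is shorter and avoids the sublevel-set computation.
\item It works verbatim with convergence in measure or with any $L^p$ convergence, $p<\infty$.
\item It is slightly more careful in its bookkeeping. You invoke the negation only at the times $t_n$, where the $C^{1/3}$ bound makes $u(t_n,\cdot)$ continuous, so the minimum is genuinely attained. The paper picks a zero $x(t)$ at every $t>0$ without comment, although it only needs it at $t=t_n$ in the end.
\end{itemize}

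The paper's version has the merit of showing directly that only the convergence of the maximum and conservation of mass are needed.
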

\begin{proof}
The idea of the proof is quite simple. Since the mass is preserved and the maximum goes to $1/2\pi$ when the time goes to $+\infty$ we have that there is more and more mass near $1/2\pi$. If the minimum of the solution is 0 for every time, we would have for large $t$ some points which are very near a point of minima of $u$ and whose value is very near $1/2\pi$. This would imply that the $C^\alpha$ norm of the solution would explode but using the assumption \eqref{eq:contradictionavenir} this will not be possible. 
\newline
Let us formalize this idea. In this proof, we denote $M(t):=\max_\T u(t,.)$. Assume by contradiction that for every time $t>0$ the minimum of $u(t,.)$ is zero. Let $x(t)\in\T$ be such that $u(t,x(t))=0$. Fix $\varepsilon>0$ that has to be thought as small and consider for every $t>0$ $$J(\varepsilon,t)=(x(t)-\varepsilon,x(t)+\varepsilon).$$ We also fix $\varepsilon'>0$ and we define for every $t>0$ $$I(\varepsilon',t)=\{y\in\T\,|\, u(t,y)<\frac{1}{2\pi}-\varepsilon'\}.$$
In \cite{bertucci2025new} it was proved in Proposition 5.16 (just using the conservation of the mass of a solution of \eqref{eq:dysoncirculairedp}) that $$\leb(I(\varepsilon',t))\le 2\pi \cfrac{M(t)-\frac{1}{2\pi}}{\varepsilon'+M(t)-\frac{1}{2\pi}}.$$
Indeed, we have: 
\begin{align*}
0&=1-\int_\T \mu(t,\theta)d\theta=\int_\T\left(\cfrac{1}{2\pi}-\mu(t,\theta)\right)d\theta\\
&=\int_{I(\varepsilon',t)}\left(\cfrac{1}{2\pi}-\mu(t,\theta)\right)d\theta+\int_{\T-I(\varepsilon',t)}\left(\cfrac{1}{2\pi}-\mu(t,\theta)\right)d\theta\\
&\ge\varepsilon' \leb(I(\varepsilon',t))+\left(\cfrac{1}{2\pi}-M(t)\right)\left(2\pi-\leb(I(\varepsilon',t))\right).
\end{align*}
Since $M(t)\underset{t\to +\infty}{\longrightarrow} \frac{1}{2\pi}$ we obtain that for all $\varepsilon'$, $\leb(I(\varepsilon',t))\underset{t\to +\infty}{\longrightarrow} 0$.
So we can consider $T(\varepsilon,\varepsilon')$ such that $\leb(I(\varepsilon',t))<2\varepsilon$ for every $t\ge T(\varepsilon,\varepsilon')$.
\newline
For $t>T(\varepsilon,\varepsilon')$ we have: $$I(\varepsilon',t)^c\cap J(\varepsilon,t)\ne \emptyset,$$
where $A^c$ is the complementary of the set $A$.
Indeed, if it was not the case $J(\varepsilon,t)$ would be included in $I(\varepsilon',t)$ which is not possible since the Lebesgue measure of $J(\varepsilon,t)$ is $2\varepsilon$ and the Lebesgue measure of $I(\varepsilon',t)$ is strictly smaller than $2\varepsilon$ by definition of $T(\varepsilon,\varepsilon')$.
\newline
So we can consider $y(\varepsilon,\varepsilon',t)\in I(\varepsilon',t)^c\cap J(\varepsilon,t)$. 
\newline
By definition of the $C^{1/3}$ norm, for $t\ge T(\varepsilon,\varepsilon')$, we get 
\begin{align*}
||u(t,.)||_{C^{1/3}}&\ge \cfrac{|u(t,x(t))-u(y(\varepsilon,\varepsilon',t))|}{|x(t)-y(\varepsilon,\varepsilon',t)|^{1/3}}\\
&\ge \cfrac{\frac{1}{2\pi}-\varepsilon'}{\varepsilon^{1/3}}
\end{align*}
Consider $\varepsilon>0$ such that the right side is strictly greater than $K$. This is in contradiction with $\eqref{eq:contradictionavenir}$ in $t=t_n$ for $n$ large enough.
\end{proof}
Finally, it was proved in Proposition 5.20 in \cite{bertucci2025new} that if the minimum of a solution of \eqref{eq:dysoncirculairedp} is strictly positive at a certain time then the solution converges toward the uniform distribution on the circle in $L^\infty$ norm. Combining these results give the convergence in $L^\infty$ norm of solutions of \eqref{eq:dysoncirculairedp} toward the uniform distribution.
\begin{corollary}
Let $u$ be the solution of the periodic Dyson equation with initial condition $u_0\in\mesT$. Then, there exists a time $t>0$ such that $\min_{x\in\T} u(t,x)>0$ which implies $$\|u(t,.)-\frac{1}{2\pi}\|_{\infty}\underset{t\to +\infty}{\longrightarrow} 0.$$
\end{corollary}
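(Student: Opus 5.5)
The plan is to combine the two facts established just above the statement. Let $u$ be the solution with initial condition $u_0\in\mesT$. First, by the $L^\infty$ regularization of \cite{bertucci2025new}, $u(\tau,.)\in L^\infty$ for every $\tau>0$. We may therefore restart the equation at a small time $\tau$, using uniqueness and the semigroup property of viscosity solutions, and assume that the initial datum is bounded. On $\T$ no second-moment condition is needed, since the entropy $\int_\T u\log u$ is bounded below by $-\log(2\pi)$ (Jensen on a finite measure space).

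Next, I would apply the periodic analogues of the estimates \eqref{eq:finalentropy} and \eqref{eq:final3/2}, which hold by the same computations. The first one, with a bounded initial datum, gives $u(s,.)\in H^{1/2}$ for almost every $s>0$. The second one shows that $s\mapsto\|u(s,.)\|_{\dot H^{1/2}}$ is non-increasing, and that $\int_{s_0}^{+\infty}\|u^{3/2}(s,.)\|_{\dot H^1}^2ds\le\|u(s_0,.)\|^2_{\dot H^{1/2}}<+\infty$. This is \eqref{Dysonhyp}, up to a shift in time.

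From this integrability I extract a sequence $t_n\to+\infty$ with $\|u^{3/2}(t_n,.)\|_{\dot H^1}\le K$. On $\T$ I use the one-dimensional embedding $|f(x)-f(y)|\le |x-y|^{1/2}\|f'\|_{L^2}$, which needs no zero-mean or decay hypothesis. Combined with Lemma \ref{lemma:Calpha}, it gives \eqref{eq:contradictionavenir}. Proposition \ref{prop:estiméespreuves} then provides a time $t>0$ with $\min_\T u(t,.)>0$. This step also uses the convergence $\max_\T u(t,.)\to\frac1{2\pi}$ from \cite{bertucci2025new}, which holds for any initial datum.

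Finally, I invoke Proposition 5.20 of \cite{bertucci2025new}: strict positivity of the minimum at some time implies $\|u(t,.)-\frac1{2\pi}\|_\infty\to0$. Concretely, positivity is propagated by the minimum principle, and with $\max\to\frac1{2\pi}$ and $\min$ bounded away from zero one concludes via the evolution of the minimum.

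The main obstacle is justifying that the a priori estimates hold rigorously in the periodic setting for the viscosity solution. This requires the approximation $u^\varepsilon$ with vanishing viscosity on $\T$, an Aubin--Lions compactness argument, and identification of the weak limit of $(u^\varepsilon)^{3/2}$ with $u^{3/2}$, exactly as in the real-line proof. On the compact torus this is actually simpler, because no localization is needed and the $L^\infty$ bounds give uniform control.
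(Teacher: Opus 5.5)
Your proposal is correct and follows the paper's argument step by step. You reduce to bounded data via the $L^\infty$ regularization, use the periodic versions of the entropy and $\dot H^{1/2}$ estimates to obtain \eqref{Dysonhyp}, extract times $t_n\to+\infty$ along which the uniform $C^{1/3}$ bound \eqref{eq:contradictionavenir} holds, apply the positivity proposition, and conclude with Proposition 5.20 of \cite{bertucci2025new}. Your torus-specific remarks — the lower bound $\int_\T u\log u\ge-\log(2\pi)$ replacing the second-moment condition, and the embedding $|f(x)-f(y)|\le|x-y|^{1/2}\|f'\|_{L^2}$ without a mean-zero hypothesis — make explicit what the paper covers by ``exactly the same computations and arguments''.
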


\subsection*{Acknowledgments} The author is thankful to Charles Bertucci (Ceremade) for the discussions on the question and to Pierre-Louis Lions (Collège de France) for pointing out the problem and for his course at Collège de France on the topic.
\renewcommand{\MR}[1]{}
\bibliographystyle{smfplain}
\bibliography{ref}

\end{document}